\newtheorem{thm}{Theorem}[section]
\newtheorem{lm}[thm]{Lemma}
\theoremstyle{remark}
\newtheorem{re}[thm]{Remark}
\theoremstyle{definition}
\numberwithin{equation}{section}
\numberwithin{thm}{section}
\begin{document}



\subjclass{Primary 32-XX; Secondary 46-XX}



\title{The cluster value problem for Banach spaces}


%

\author{W.~B.~Johnson}
\address{Department of Mathematics, Texas A\&M University, College Station, TX 77843}
\curraddr{}
\email{johnson@math.tamu.edu}

\author{S.~Ortega Castillo}
\address{Department of Mathematics, Texas A\&M University, College Station, TX 77843}
\curraddr{CIMAT A.~C., Guanajuato, Guanajuato, M\'exico 36240}
\email{sofia.ortega@cimat.mx}


%

\thanks{The authors were supported in part by NSF DMS 10-01321.}

%

\begin{abstract}
The main result is that the cluster value problem in separable Banach spaces, for the Banach algebras $A_u$ and $H^{\infty}$, can be reduced to the cluster value problem in those spaces which are $\ell_1$ sums of a sequence  of finite dimensional spaces. In particular we prove that the cluster value problem for $\ell_1$ is equivalent to the cluster value problem for $L_1(0,1)$.
\end{abstract}

\maketitle



\section{Introduction}

The cluster value problem was introduced in 1961 in \cite{IJS} by a group of mathematicians who worked together under the alias I.~J.~Schark. They compared the  set of cluster values of a function $f$  in an algebra $H$ of bounded analytic functions on a domain $U$ at a point $z\in \overline{U}$ with the set $\{ \gamma(f) : \gamma \in \mathcal{M}_z \}$ where $\mathcal{M}_z$ is the set of multiplicative linear functionals on the algebra $H$ such that $\gamma(L) = L(z)$ for all linear functionals.  They  found these sets are the same when the domain $U$ is the unit disk in the complex plain. It was soon understood that this cluster value theorem would be a consequence of a positive solution to the  famous corona problem, which of course was solved positively in the unit disk of the complex plane by Carleson in 1962. In 1979 the cluster value problem was solved positively by McDonald in \cite{Mc} for bounded analytic functions defined on a strongly pseudoconvex domain with smooth boundary in $\mathbb{C}^n$.  But only in 2012 did Aron, Carando, Gamelin, Lasalle and Maestre consider the case of domains $U$ in  infinite dimensional Banach spaces. They  solved the cluster value problem at the origin for bounded, uniformly continuous and analytic functions defined on the ball of a Banach space with a shrinking $1$-unconditional basis in \cite{ACGLM}. This class of spaces  include, for example,  the $\ell_p$ spaces for $1<p<\infty$ as well as $c_0$, but not, for example, the space $\ell_1$ or  $\ell_\infty$ or $L_p(0,1)$, $1\le p \not= 2 \le \infty$.  In \cite{JO}  we gave a positive answer to the cluster value problem for bounded analytic functions on the ball of the space of continuous functions on a dispersed compact Hausdorff  space.

\medskip  

This paper focuses  attention on  the cluster value problem for the unit ball of the $\ell_1$ sum of finite dimensional spaces. We prove that in order to solve the cluster value problem for the unit  ball of all separable Banach spaces, it is enough to solve it for the ball of spaces that are an $\ell_1$ sum of a sequence of  finite dimensional spaces. That is, if the cluster value problem has a negative solution for some separable Banach space, then it has a negative answer for some space that is an $\ell_1$ sum of finite dimensional spaces. Although we do not see a reduction of the cluster value problem for all Banach spaces to the case of separable Banach spaces, an obvious modification of the proof of Theorem \ref{maintheorem} yields that if the cluster value problem has a positive solution for (uncountable)  $\ell_1$ sums of all collections of finite dimensional spaces, then there is a positive answer for all Banach spaces.

\medskip

To be precise, we recall \cite{JO} that the cluster value problem is posed as follows: Given a Banach space $X$, an algebra $H$ of bounded analytic functions over the ball $B$ of $X$ that contains $X^*$, and a point $x^{**}$ in the weak$^*$ closure of $B_{X^{**}}$ (that is, in the closed unit ball of $X^{**}$), we define, for $f \in H$,  the set $Cl_B(f, x^{**})$ to be all the limit values of $f(x_\alpha)$ over all nets $(x_\alpha)$ in $B_{X}$ converging to $x^{**}$ in the weak$^*$ topology, and we define $M_{x^{**}}$ to be the multiplicative linear functionals in the spectrum over $H$ that, when restricted to $X^*$, coincide with $x^{**}$. Is it then true that $Cl_B(f, x^{**}) \supset M_{x^{**}}(f)$ for every $f \in H$? The reverse inclusion is always true as we may recall from the first section in \cite{JO}.

\medskip

The algebras of analytic functions we consider in this paper are $H^{\infty}(B)$, the set of all bounded analytic functions on the open unit  ball $B=B_X$ of the Banach space $X$, and $A_u(B)$, the uniformly continuous functions that are in $H^{\infty}(B)$.

\section{Reduction of the cluster value problem to $\ell_1$ sums of finite dimensional spaces}

We will see that the cluster value problem in Banach spaces can be reduced to the cluster value problem in those spaces that are $\ell_1$ sums of finite dimensional spaces. For simplicity, we will only show that the cluster value problem for separable spaces can be reduced to the cluster value problem in spaces that are a countable $\ell_1$ sum of finite dimensional spaces, and it will be clear that the nonseparable case reduces to uncountable $\ell_1$ sums of finite dimensional spaces.

\medskip

We will need the next two lemmas. The ideas in the following lemmas originated with C. Stegall \cite{S} and were developed by the first author in \cite{J}.

\bigskip

\begin{lm} \label{lemma1} Let $Y$ be a separable Banach space  and $Y_1\subset Y_2\subset Y_3 \subset \dots$ an increasing sequence of finite dimensional subspaces whose union is dense in $Y$. Set  $X=(\sum Y_n)_1$. Then the isometric quotient map $Q: X \to Y$ defined by $$Q(z_n)_n :=\sum_{n=1}^{\infty} z_n$$ induces an isometric algebra homomorphism $Q^\#:H(B_Y) \to H(B_X),$ where $H$ denotes either the algebra $A_u$ or the algebra $H^{\infty}.$
\end{lm}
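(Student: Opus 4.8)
The natural candidate for $Q^\#$ is composition: $Q^\#(f) := f \circ Q$. The plan is to first verify that $Q$ is a well-defined norm-one operator that is in fact an isometric quotient map, i.e.\ that $Q$ carries the open ball $B_X$ \emph{onto} the open ball $B_Y$, and then to read off every property of $Q^\#$ from this single geometric fact about $Q$.

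First I would check that $Q$ is well defined and contractive. For $x=(z_n)_n \in X$ the series $\sum_n z_n$ converges absolutely in $Y$, since $\sum_n \|z_n\|_Y = \|x\|_X < \infty$ (each $Y_n$ carries the norm inherited from $Y$), and the triangle inequality gives $\|Qx\|_Y \le \sum_n \|z_n\|_Y = \|x\|_X$. Hence $\|Q\|\le 1$, so $Q(B_X)\subseteq B_Y$.

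The heart of the argument is the reverse inclusion $B_Y \subseteq Q(B_X)$, which upgrades $Q$ to an isometric quotient map. Fix $y \in Y$ with $\|y\|_Y<1$ and $\varepsilon>0$ small enough that $\|y\|_Y+\varepsilon<1$. Using that $\bigcup_n Y_n$ is dense in $Y$, I would build by telescoping a strictly increasing sequence $(n_k)$ and vectors $z_{n_k}\in Y_{n_k}$ with $y - \sum_{j\le k} z_{n_j}\to 0$ and $\sum_k \|z_{n_k}\|_{Y} < \|y\|_Y + \varepsilon$: choose $z_{n_1}$ close to $y$, and at each later stage approximate the current remainder $y-\sum_{j<k}z_{n_j}$ well enough inside some $Y_{n_k}$ that the correction terms have geometrically decaying norms summing within the stated bound. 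Setting the remaining coordinates equal to $0$ produces $x=(z_n)_n \in X$ with $Qx=\sum_k z_{n_k}=y$ and $\|x\|_X<1$, so $y\in Q(B_X)$; thus $Q(B_X)=B_Y$. I expect this telescoping estimate, in particular the bookkeeping of the error bounds so that the total norm stays below $\|y\|_Y+\varepsilon$, to be the only genuinely technical point, and it is the one place where the density hypothesis on $\bigcup_n Y_n$ is used.

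With $Q(B_X)=B_Y$ in hand, the properties of $Q^\#$ follow formally. Since $Q$ is bounded and linear it is analytic, and $Q(B_X)\subseteq B_Y$, so for analytic $f$ on $B_Y$ the composition $f\circ Q$ is analytic on $B_X$; it is bounded because $\sup_{x\in B_X}|f(Qx)|\le \sup_{B_Y}|f|<\infty$, and when $f\in A_u(B_Y)$ the map $f\circ Q$ is uniformly continuous because $Q$ is $1$-Lipschitz. Hence $Q^\#$ maps $H(B_Y)$ into $H(B_X)$ for $H\in\{A_u,H^\infty\}$, and it is an algebra homomorphism since composition respects pointwise sums and products. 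Finally $Q^\#$ is isometric: because $Q$ maps $B_X$ onto $B_Y$, the value sets $\{f(Qx):x\in B_X\}$ and $\{f(y):y\in B_Y\}$ coincide, so $\|Q^\# f\|_{\infty}=\sup_{B_X}|f\circ Q|=\sup_{B_Y}|f|=\|f\|_{\infty}$.
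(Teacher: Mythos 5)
Your proposal is correct and follows essentially the same route as the paper: define $Q^\# f = f\circ Q$, establish that $Q$ is an isometric quotient map, and read off linearity, multiplicativity, boundedness, and the isometry from the fact that $f\circ Q$ over $B_X$ takes the same supremum as $f$ over $B_Y$. The one difference is at the surjectivity step, where the paper gets by more cheaply: since $Q$ maps the canonical copy $\widetilde{Y_n}\subset X$ isometrically onto $Y_n$, the image $Q(B_X)$ contains $\bigcup_n B_{Y_n}$ and is therefore dense in $B_Y$, which together with the continuity of $f$ on $B_Y$ already gives $\sup_{B_X}|f\circ Q|=\sup_{B_Y}|f|$; your telescoping successive-approximation argument proving the exact equality $Q(B_X)=B_Y$ is valid (and is the standard way to verify the quotient property of an $\ell_1$ sum), but it is more than the lemma needs.
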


\begin{proof}

Note that for all $(z_n)_n \in X$, 
\begin{equation}\label{eq1}
\|Q(z_n)_n\|=\|\sum_{n=1}^{\infty} z_n\| \leq \sum_{n=1}^{\infty}\|z_n\|=\|(z_n)_n\|_1.
\end{equation}

Let $\widetilde{Y_n}=\{ (z_n)_n \in X: \; z_k=0 \; \forall \; k \neq n\}.$ Since $Q(B_{\widetilde{Y_n}})=B_{Y_n}$ for all $n \in \mathbb{N},$ we now have that $Q(B_X)$ is dense in $ B_Y$ and hence $Q$ is an isometric quotient map.

\medskip

Then the function $Q^\#:H(B_Y) \to H(B_X)$ given by $Q^\#(f)=f\circ Q$ is an isometric homomorphism because $Q^\#$ is clearly linear and for all $f \in H(B_Y)$,  
$$
\|Q^\#(f)\|=\sup_{x\in B_X}|f\circ Q(x)|=\sup_{y \in B_Y}|f(y)|=\| f \|.
$$
Moreover, for all $f, g \in H(B_Y)$,  
$$
Q^\#(f\cdot g)=(f\cdot g) \circ Q=(f\circ Q) \cdot (g\circ Q)=Q^\#(f)Q^\#(g),
$$
so $Q^\#$ is an algebra homomorphism.
\end{proof}

\begin{lm}\label{lemma2}
Under the assumptions of Lemma \ref{lemma1}, there is a norm one algebra homomorphism $T: H(B_X) \to H(B_Y)$ so that $T(X^*)\subset Y^*$ and $T \circ Q^\#=I_{H(B_Y)}$.
 \end{lm}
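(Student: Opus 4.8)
The plan is to build $T$ as a composition with an isometric lifting of $Y$ into the bidual $X^{**}$, using the canonical extension of the algebra to the bidual ball to make analyticity transparent. For each $n$ let $\iota_n\colon Y_n \to X$ be the isometry onto $\widetilde{Y_n}$, so that $\iota_n(y)=(z_k)_k$ with $z_n=y$ and $z_k=0$ otherwise; by the definition of $Q$ we have $Q\iota_n=I_{Y_n}$ and $\|\iota_n y\|=\|y\|$. Fix a free ultrafilter $\mathcal{U}$ on $\mathbb{N}$. For $y\in\bigcup_n Y_n$, say $y\in Y_m$, the net $(\iota_n y)_{n\ge m}$ is bounded in $X$, so by weak$^*$ compactness of balls in $X^{**}$ the limit $Wy:=\mathrm{w}^*\text{-}\lim_{n\to\mathcal{U}}\iota_n y$ exists. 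Since $\bigcup_n Y_n$ is dense in $Y$ and $\|Wy\|\le\|y\|$, this extends to a linear map $W\colon Y\to X^{**}$ of norm at most one. I would then check two facts: (i) $W$ is an isometry, by testing $Wy$ against $Q^*y^*=(y^*|_{Y_n})_n$ for a norming $y^*\in Y^*$, which recovers $\langle Q^*y^*,Wy\rangle=\lim_{n\to\mathcal{U}}y^*(y)=\|y\|$; and (ii) $W$ lifts the canonical inclusion through $Q^{**}$, that is $Q^{**}W=\iota_Y$ where $\iota_Y\colon Y\hookrightarrow Y^{**}$, since $\langle y^*,Q^{**}Wy\rangle=\lim_{n\to\mathcal{U}}\langle y^*|_{Y_n},y\rangle=\langle y^*,y\rangle$ on the dense subspace. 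In particular $W(B_Y)\subset B_{X^{**}}$.

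Next I would invoke the canonical bidual extension $g\mapsto\bar g$ of Aron--Berner type (Davie and Gamelin in the $H^\infty$ case), a norm-preserving algebra homomorphism $H(B_X)\to H(B_{X^{**}})$ that carries $A_u$ to $A_u$ and $H^\infty$ to $H^\infty$ and commutes with biadjoints of linear maps, and define
\[
T(g):=\bar g\circ W,\qquad g\in H(B_X).
\]
Because $W$ is bounded and linear and $\bar g$ is bounded and analytic on $B_{X^{**}}$, the composite is bounded and analytic on $B_Y$; when $g\in A_u(B_X)$ the function $\bar g$ is uniformly continuous and $W$ is $1$-Lipschitz, so $\bar g\circ W\in A_u(B_Y)$. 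Thus $T$ maps $H(B_X)$ into $H(B_Y)$, and it is an algebra homomorphism as a composition of the homomorphism $g\mapsto\bar g$ with $h\mapsto h\circ W$. Moreover $\|T(g)\|=\sup_{B_Y}|\bar g\circ W|\le\|\bar g\|=\|g\|$ since $W(B_Y)\subset B_{X^{**}}$, so $\|T\|\le 1$. For $x^*\in X^*$ the extension $\bar{x^*}$ is the weak$^*$-continuous functional on $X^{**}$ determined by $x^*$, so $T(x^*)\colon y\mapsto\langle x^*,Wy\rangle$ is a bounded linear functional on $Y$; hence $T(X^*)\subset Y^*$.

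Finally I would verify $T\circ Q^\#=I_{H(B_Y)}$. For $f\in H(B_Y)$, naturality of the extension under composition with the linear map $Q$ gives $\overline{Q^\# f}=\overline{f\circ Q}=\bar f\circ Q^{**}$, so $T(Q^\# f)=\bar f\circ Q^{**}\circ W=\bar f\circ\iota_Y=f$, the last step because $\bar f$ restricts to $f$ on $B_Y$. Together with $\|Q^\# f\|=\|f\|$ from Lemma \ref{lemma1}, this also forces $\|T\|\ge 1$, so $T$ has norm one. The hard part is exactly the analyticity for $g\in H^\infty$: pulling $g$ back along the coordinate embeddings $\iota_n$ only defines a function on the dense subspace $\bigcup_n Y_n$, and in the $H^\infty$ case there is no uniform continuity available to extend it to all of $B_Y$. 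Passing to the bidual is what repairs this, but it makes the whole argument rest on the extension to $B_{X^{**}}$ being simultaneously norm-preserving, multiplicative, and natural with respect to $Q^{**}$; securing these properties of the extension --- which is more delicate for $H^\infty$ than for $A_u$ --- is the crux.
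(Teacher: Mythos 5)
Your proof is correct, but it takes a genuinely different route from the paper. Both arguments start from the same germ --- the coordinate sections $S_n=\iota_n$ and a free ultrafilter $\mathcal{U}$ --- but you apply the ultrafilter limit at the level of \emph{points}, producing a linear lifting $W\colon Y\to X^{**}$ with $Q^{**}W=\iota_Y$ (note isometry of $W$ is automatic: $\|y\|=\|Q^{**}Wy\|\le\|Wy\|\le\|y\|$), and then set $T(g)=\bar g\circ W$, outsourcing all the analytic difficulty to the Aron--Berner/Davie--Gamelin extension. The paper instead applies the ultrafilter limit at the level of \emph{values}, defining $Sg(y)=\lim_{n\in\mathcal{U}}g(S_ny)$ on $B_{\cup Y_n}$, and then does by hand exactly what you delegate to the extension theorems: uniform continuity on $rB$ via the Schwarz lemma, analyticity on complex lines via termwise Taylor expansion with Cauchy estimates, and a gluing lemma (Lemma \ref{lemma3}) to pass from the dense subspace to $B_Y$. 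What your approach buys is brevity and conceptual clarity --- $T$ is exhibited as a composition operator with a bidual-valued lifting, a picture that meshes with how the cluster value literature (e.g.\ \cite{ACGLM}) already organizes the spectrum over $\overline{B}_{X^{**}}$ --- at the cost of resting on three nontrivial imported theorems that you correctly identify as the crux: norm preservation of the extension on $H^\infty$ (Davie--Gamelin), multiplicativity (which follows from Davie--Gamelin's simultaneous net approximation $\bar g(z)=\lim g(x_\alpha)$ for all $g$ at once), and preservation of $A_u$ with naturality $\overline{f\circ Q}=\bar f\circ Q^{**}$ (Aron--Cole--Gamelin for the former; the latter is checked on Taylor polynomials via weak$^*$-continuity of $Q^{**}$ and summed). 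All of these are established results, so the proof stands; the paper's construction buys self-containedness --- it never needs multiplicativity or naturality of any canonical extension, only elementary one-variable arguments --- and its Lemma \ref{lemma3} has independent utility. One caution if you write this up: your $W$ is a weak$^*$ ultrafilter limit while $\bar g$ is not weak$^*$-continuous, so your $T$ need not agree pointwise with the paper's $Tg(y)=\lim_{n\in\mathcal{U}}g(S_ny)$; that is harmless here since only existence of some norm-one homomorphism with the two stated properties is required, but the identity $T\circ Q^{\#}=I$ must be (and in your sketch is) verified through the naturality of the extension rather than through the explicit ultrafilter formula.
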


\begin{proof}

The first part of the proof consists of constructing $T$ and verifying that $TH(B_X) \subset H(B_Y)$. 

\medskip

For every $y \in (\cup Y_n)$ and $n \in \mathbb{N},$ let $S_n(y)=(z_i)_i \in X$ be given by

$$z_i =\left\lbrace \begin{array}{ll}
         y & \text{  if } i=n \text{ and } y \in Y_n\\
         0 & \text{ otherwise} 
         \end{array} \right. 
         $$

Let $\mathcal{U}$ be a free ultrafilter on $\mathbb{N}$.   For each $g \in H(B_X)$ set  $$Sg(y)=\lim_{n \in \mathcal{U}} g(S_n y) \text{ for every } y \in B_{(\cup Y_n)},$$ which is well defined because $g$ is bounded. Next we prove that $Sg$ is continuous:

\medskip

Let $0<r<1.$ Since $g \in H^{\infty}(B_X)$ then Schwarz' Lemma (Thm. 7.19, \cite{M}) and the convexity of $B_X$ imply that $g \in A_u(r B_X).$ Let $\epsilon >0.$ Since $g$ is uniformly continuous on $rB_X$ there exists $\delta>0$ such that, if $a,b \in r B_X$ and $\|a-b
\|<\delta,$ then $\|g(a)-g(b)\|<\epsilon.$ Thus, given $y_1, y_2 \in r B_{(\cup Y_n)}$ such that $\|y_1-y_2\|<\delta$, we can find $N \in \mathbb{N}$ such that $y_1,y_2 \in Y_n \; \; \forall \; n\geq N,$ and then $\|S_n(y_1)-S_n(y_2)\|=\|y_1-y_2\|<\delta$ eventually for $n$, so
$$\|Sg(y_1)-Sg(y_2)\|=\lim_{n \in \mathcal{U}}\|g(S_n(y_1))-g(S_n(y_2))\| \leq \epsilon.$$

Since each $Sg: B_{(\cup Y_n)} \to \mathbb{C}$ is uniformly continuous on $rB_{(\cup Y_n)}$ for $0<r<1,$ we can continuously extend each $Sg$ to $Tg: B_Y \to \mathbb{C}.$ Moreover, it is evident that $Tg$ is uniformly continuous on $B_X$ when $g \in A_u(B_X).$ It is left to show that each $Tg$ is analytic by checking that every $Tg$ is analytic in each complex line (Thm. 8.7, \cite{M}). We do this in two parts.

\medskip

\textbf{Step 1} Let us check that each $Sg$ is analytic on complex lines:

\medskip

Let $y^1 \in B_{(\cup Y_n)}$ and $y^2 \neq 0 \in {(\cup Y_n)}.$ Since $B_{(\cup Y_n)}$ is open we can find $R>0$ such that, if $\|y-y^1\|<R$, then $y \in B_{(\cup Y_n)}.$ Choose $r>0$ such that $r \|y^2\|< R.$ Thus, if $|\lambda|\leq r$ we have that $\lambda \in \Lambda=\{ \zeta \in \mathbb{C}: \; y^1+\zeta y^2 \in B_{(\cup Y_n)}\}.$

\medskip 

For each $n \in \mathbb{N},$ let $u_n=S_n(y^1)$ and $w_n=S_n(y^2).$ 

\medskip

Using the notation in Remark 5.2 in \cite{M}, we claim that
$$Sg(y^1+\lambda y^2)=\sum_{m=0}^{\infty} ( \lim_{n \in \mathcal{U}} P^m g (u_n)(w_n))\lambda^m,$$
uniformly on $\lambda,$  for $|\lambda|\leq r.$ Let us start by showing that for each $m \in \mathbb{N}$ and $\lambda \in \bar{\Delta}(0,r),$ $\lim_{n \in \mathcal{U}} P^m g (u_n)( \lambda w_n)$ exists.

\medskip

We can find $s>1$ such that also $sr\|y^2\|<R.$ Then, when $|t| \leq s$ and $|\lambda|\leq r$, we have that $y^1+t\lambda y^2 \in B_{(\cup Y_n)},$ because

$$\|(y^1+t\lambda y^2)-y^1\| = |t\|\lambda| \|y^2\| \leq s r \|y^2\| < R,$$

so $u_n+t\lambda w_n \in B_X$ eventually for n, and from Cauchy's Inequality (Cor. 7.4, \cite{M}), for each $m \in \mathbb{N}$
$$\|P^m g(u_n)(\lambda w_n)\|\leq \frac{1}{s^m}\|g\|,$$
eventually for n, and then $\lim_{n \in \mathcal{U}} P^m g (u_n)( \lambda w_n)$ exists.

\medskip

Moreover, given $M \in \mathbb{N}$ and $\lambda$ such that $|\lambda|\leq r,$
\begin{align*}
\| Sg(y^1+\lambda y^2)&-\sum_{m=0}^M \lim_{n \in \mathcal{U}} P^m g (u_n)(w_n))\lambda^m\|\\
&=\|\lim_{n \in \mathcal{U}} (g(u_n+\lambda w_n)-\sum_{m=0}^M P^m g (u_n)(\lambda w_n) )\|\\
&=\|\lim_{n \in \mathcal{U}} \sum_{m=M+1}^{\infty} P^m g (u_n)(\lambda w_n)\|\\
&\leq \sum_{m=M+1}^{\infty} \tfrac{1}{s^m} \|g\|=\tfrac{\|g\|}{s-1} \tfrac{1}{s^M},
\end{align*}
which goes to zero as $M \to \infty.$

\medskip

Thus $Sg$ is analytic on complex lines.

\bigskip

\textbf{Step 2} The following general lemma should be known, but we could not find a reference.

\medskip

\begin{lm} \label{lemma3}  If $\phi:B_Y \to \mathbb{C}$ is bounded and uniformly continuous on $sB_Y$ for each $0<s<1,$ and there is a dense subspace $Z$ of $Y$ such that $\phi|_{B_Z}$ is analytic on complex lines, then $\phi$ is analytic on complex lines in $B_Y$ (and hence analytic).
\end{lm}
\begin{proof}

Let $y^1 \in B_{Y}$ and $y^2 \neq 0 \in {Y}.$ Let $s \in (\|y^1\|,1)$. Since $sB_{Y}$ is open  and contains $y^1$, we can find $R>0$ such that, if $\|y-y^1\|<R$ then $y \in sB_{Y}.$ Choose $r>0$ such that $r \|y^2\|\leq R.$ Thus, if $|\lambda|< r$ we have that $\lambda \in \Lambda=\{ \zeta \in \mathbb{C}: \; y^1+\zeta y^2 \in sB_{Y}\}.$

\medskip

Let $f: \lambda \to \phi(y^1+\lambda y^2)$, a function defined for $|\lambda|< r$. We want to show that $f$ is analytic. 

\medskip

Let $\{ y_k^1\}_k \subset B_{Z}$ and $\{ y_k^2\}_k \subset Z$ be sequences such that $\|y_k^1-y^1\|\leq \frac{1}{2^k}$ and $\|y_k^2-y^2\|\leq \frac{1}{2^k}.$ Choose $K_1 \in \mathbb{N}$ such that $\frac{1+r}{2^{K_1}}\leq R-r \|y^2\|.$ Then for $k \geq K_1$ and $|\lambda|< r$ we have that 
\begin{align*}
\|(y_k^1+\lambda y_k^2)-y^1\|&\leq \|y_k^1-y^1\|+|\lambda\||y_k^2-y^2\| +|\lambda| \|y^2\| \\ 
&< \tfrac{1+r}{2^{K_1}}+r \|y^2\| \\
&\leq R,
\end{align*}
so $y_k^1+\lambda y_k^2 \in sB_Z$.

\medskip

For each $k\geq K_1,$ let $f_k: \lambda \to \phi(y_k^1+\lambda y_k^2),$ which is an analytic function for $|\lambda|< r$ by assumption.

\medskip

Since $\phi$ is bounded, clearly $\{f_k\}_{k\geq K_1}$ is uniformly bounded. Let us now show that  $\{f_k\}_{k\geq K_1}$ converges uniformly to $f$. Let $\epsilon>0$. Since $\phi$ is uniformly continuous on $sB_Z$, we can find $\delta>0$ such that,
$$a,b \in sB_Z, \; \; \; ||a-b||<\delta  \Longrightarrow ||\phi(a)-\phi(b)||<\epsilon.$$

Choose $K\geq K_1$ such that $\frac{1+r}{2^K}<\delta$. Then $\forall k\geq K$ and $\lambda$ with $|\lambda|<r$ ,

\begin{align*}
\|(y_k^1+\lambda y_k^2)-(y^1+\lambda y^2)\|&\leq \|y_k^1-y^1\|+|\lambda| \|y_k^2-y^2\|\\
&\leq \frac{1+r}{2^k}\\
&<\delta,
\end{align*}

so $\|f_k(\lambda)-f(\lambda)\|=\|\phi(y_k^1+\lambda y_k^2)-\phi(y^1+\lambda y^2)\|<\epsilon$.

\medskip

Then, by the lemma on p.~226 in \cite{A}, $f$ is analytic.

\end{proof}

From the previous two steps, we obtain that $T$ is a well defined mapping from $H(B_X)$ into $H(B_Y).$ Now, given $x^* \in X^*,$ $y^1, y^2 \in B_Y$ and $\lambda \in \mathbb{C}$ such that $y^1+\lambda y^2 \in B_Y,$ we can find $\{ y_k^1\}_k \subset B_{(\cup Y_n)}$ converging to $y^1$ and $\{ y_k^2\}_k \subset B_{(\cup Y_n)}$ converging to $y^2,$ and then
\begin{align*}
Tx^*(y^1+\lambda y^2)&=\lim_{k\to \infty}\lim_{n \in \mathcal{U}} x^*(S_n(y_k^1+\lambda y_k^2))\\
&=\lim_{k\to \infty}\lim_{n \in \mathcal{U}} (x^*(S_n(y_k^1))+\lambda x^*(S_n(y_k^2)))\\
&=Tx^*(y^1)+\lambda Tx^*(y^2),
\end{align*}
i.e. $Tx^* \in Y^*.$ This shows that $TB_{X^*}=B_{Y^*}.$

\medskip

Moreover, for every $f \in H(B_Y)$ and $y \in B_Y,$ we can find $\{ y_k\}_k \subset B_{(\cup Y_n)}$ converging to $y,$ and thus
 \begin{align*}
 T\circ Q^\#(f)(y)&=T(f \circ Q)(y)\\
 &=\lim_{k\to \infty}\lim_{n \in \mathcal{U}} f \circ Q(S_n(y_k))\\
 &=\lim_{k\to \infty} f(y_k)\\
 &=f(y),
 \end{align*}
 so $T \circ Q^\#=I_{H(B_Y)}.$

\medskip

Also, $T$ is a homomorphism because $T$ is clearly linear and for all $f, g \in H(B_X),$ $y \in B_Y,$ we can find $\{ y_k\}_k \subset B_{(\cup Y_n)}$ converging to $y,$ so
\begin{align*}
T(f \cdot g)(y)&=\lim_{k\to \infty}\lim_{n \in \mathcal{U}} f \cdot g(S_n(y_k))\\
&=\lim_{k\to \infty}\lim_{n \in \mathcal{U}} f(S_n(y_k)) \cdot g(S_n(y_k))\\
&=Tf(y)\cdot Tg(y)\\
&=(Tf\cdot Tg)(y).
\end{align*}

Finally, for every $f \in H(B_X),$
$$\|Tf\|=\sup_{y \in B_Y}|Tf(y)|=\sup_{y \in B_{(\cup Y_n)}}|\lim_{n \in \mathcal{U}} f(S_n(y))|\leq \sup_{x \in B_X}|f(x)|=\|f\|,$$
and $\|T\|=\|T\| \|Q^\#\|\geq \|T\circ Q^\#\|=1.$ So $\|T\|=1.$
\medskip
 
\end{proof}

\begin{thm} \label{maintheorem}  Let $Y$ be a separable Banach space  and $Y_1\subset Y_2\subset Y_3 \subset \dots$ an increasing sequence of finite dimensional subspaces whose union is dense in $Y$. Set  $X=(\sum Y_n)_1$. Let $H$ denote either the algebra $A_u$ or the algebra $H^{\infty}.$ If $H(B_X)$ satisfies the cluster value theorem at every $x^{**} \in \overline{B_X}^{**}$ then $H(B_Y)$ satisfies the cluster value theorem at every $y^{**} \in \overline{B_Y}^{**}$.
\end{thm}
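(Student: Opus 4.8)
The plan is to dualize the two lemmas to the maximal ideal spaces and exploit the identity $T\circ Q^\#=I_{H(B_Y)}$. Writing $\mathcal{M}(H(B_X))$ and $\mathcal{M}(H(B_Y))$ for the spectra, the homomorphism $T$ of Lemma \ref{lemma2} induces a map $\mathcal{M}(H(B_Y))\to\mathcal{M}(H(B_X))$, $\psi\mapsto\psi\circ T$, and since $T(X^*)\subset Y^*$ this map behaves well on the coordinate functionals. The goal is to show, for a fixed $y^{**}\in\overline{B_Y}^{**}$ and a fixed $\psi\in M_{y^{**}}$, that $\psi(f)\in Cl_{B_Y}(f,y^{**})$ for every $f\in H(B_Y)$; since $\psi$ and $f$ are arbitrary, this is exactly the cluster value theorem for $H(B_Y)$ at $y^{**}$ (the reverse inclusion being automatic, as recalled in the introduction).

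First I would build the character on $X$ associated to $\psi$. Set $\phi=\psi\circ T\in\mathcal{M}(H(B_X))$; it is a nonzero homomorphism because $\phi(1)=\psi(T(1))=\psi(1)=1$, noting $T(1)=T(Q^\#1)=1$. Let $x^{**}:=\phi|_{X^*}\in X^{**}$. Since $\phi$ is a character of the uniform algebra $H(B_X)$ it is contractive, so $\|x^{**}\|\le 1$ and therefore $x^{**}\in\overline{B_X}^{**}=B_{X^{**}}$, which is exactly where the hypothesis on $X$ applies. The key computation is that $x^{**}$ lies over $y^{**}$: for $y^*\in Y^*$, using $Q^*y^*=Q^\#(y^*)$ together with $T\circ Q^\#=I$, one gets $Q^{**}x^{**}(y^*)=x^{**}(Q^*y^*)=\phi(Q^\# y^*)=\psi(y^*)=y^{**}(y^*)$, so $Q^{**}x^{**}=y^{**}$. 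In particular $\phi\in M_{x^{**}}$.

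Then I would invoke the hypothesis at $x^{**}$, applied to the function $g:=Q^\#f=f\circ Q\in H(B_X)$. Since $\phi\in M_{x^{**}}$, the cluster value theorem for $H(B_X)$ gives $\phi(g)\in Cl_{B_X}(g,x^{**})$; and $\phi(g)=\psi(TQ^\# f)=\psi(f)$ by $T\circ Q^\#=I$, which is the heart of the argument. Hence there is a net $(x_\alpha)$ in $B_X$ with $x_\alpha\to x^{**}$ weak$^*$ and $f(Qx_\alpha)=(Q^\#f)(x_\alpha)\to\psi(f)$. Finally I would push this net forward by $Q$: set $y_\alpha=Qx_\alpha$, which lies in $B_Y$ because $\|Q\|=1$ and $\|x_\alpha\|<1$, so $f(y_\alpha)$ is defined and $f(y_\alpha)\to\psi(f)$; and because $Q^{**}$ is weak$^*$-to-weak$^*$ continuous, $y_\alpha\to Q^{**}x^{**}=y^{**}$ weak$^*$. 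Thus $\psi(f)\in Cl_{B_Y}(f,y^{**})$, as required.

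The individual steps are short given the two lemmas, so the only real care I expect to need is in the transfer of the cluster net from $X$ to $Y$: one must check that $Q$ carries the open ball into the open ball, so that $f\circ Q$ genuinely evaluates $f$, and that weak$^*$ convergence is preserved under $Q^{**}$, i.e. that the push-forward of a cluster net for $Q^\#f$ at $x^{**}$ is a legitimate cluster net for $f$ at $y^{**}=Q^{**}x^{**}$. Verifying $Q^{**}x^{**}=y^{**}$ and $\|x^{**}\|\le 1$ are the two places where the special structure supplied by Lemma \ref{lemma2}, namely $T(X^*)\subset Y^*$ and $T\circ Q^\#=I$, is genuinely used.
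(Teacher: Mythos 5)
Your proposal is correct and takes essentially the same route as the paper's proof: the paper likewise pulls the character back as $\tilde{\tau}=\tau\circ T$, defines $x^{**}=y^{**}\circ T$, uses $T\circ Q^{\#}=I_{H(B_Y)}$ to get $\tilde{\tau}(Q^{\#}g)=\tau(g)$, and concludes via the inclusion $Cl_{B_X}(g\circ Q,x^{**})\subset Cl_{B_Y}(g,Q^{**}x^{**})$. The only difference is that you spell out two steps the paper leaves implicit, namely the verification $Q^{**}x^{**}=y^{**}$ and the push-forward net argument behind that cluster-set inclusion.
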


\begin{proof}
We know that $\hat{f}(M_{x^{**}}(B_X)) \subset Cl_{B_X}(f, x^{**}),$ for all $f \in H(B_X)$ and $x^{**} \in \overline{B_X}^{**}.$ Let us show that $\hat{g}(M_{y^{**}}(B_Y)) \subset Cl_{B_Y}(g, y^{**})$ for all $g \in H(B_Y)$ and $y^{**} \in \overline{B_Y}^{**}.$

\medskip

Let $y^{**} \in \overline{B_Y}^{**},$ $\tau \in M_{y^{**}}(B_Y)$ and $g \in H(B_Y).$ Let $T$ be the algebra homomorphism from $H(B_X)$ to $H(B_Y)$ constructed in Lemma \ref{lemma2}. Then $x^{**}=y^{**}\circ T \in \overline{B_X}^{**},$ $Q^{\#}(g) \in H(B_X)$ and defining $\tilde{\tau}=\tau \circ T,$ we see that $\tilde{\tau} \in M_{x^{**}}(B_X)$ because for all $x^* \in X^*,$
$$\tilde{\tau}(x^*)=\tau (T x^*)=<y^{**}, T x^*>=<x^{**}, x^*>.$$

Moreover,

$$\widehat{Q^{\#}(g)}(\tilde{\tau})=\tilde{\tau}(Q^{\#}(g))=\tau(T \circ Q^{\#}(g))=\tau(g)=\hat{g}(\tau)$$
and
$$Cl_{B_X}(Q^{\#}(g), x^{**})=Cl_{B_X}(g\circ Q, x^{**}) \subset Cl_{B_Y}(g,Q^{**}x^{**})=Cl_{B_Y}(g, y^{**}),$$
so the theorem is established.
\end{proof}

\begin{re} \label{remark1}  \rm{A very special case of Theorem \ref{maintheorem} is that if $\ell_1$ satisfies  the cluster value theorem, then so does $L_1$.  We do not know for $1<p \not= 2 < \infty$ whether the cluster value theorem for $\ell_p$ implies the cluster value theorem for $L_p$.  Incidentally, in \cite{ACGLM} it was proved that $\ell_p$ for $p$ in this range satisfies the cluster value theorem at $0$, but it is open whether $L_p$ satisfies the cluster value theorem at any point of $B_{L_p}$. }
\end{re}

\begin{re} \label{remark2} \rm{The analogue of Theorem \ref{maintheorem} for non separable spaces, which is proved by a non essential modification of the proof of  Theorem \ref{maintheorem},  can be stated as follows.}  \textit{Let $Y$ be a Banach space and $(Y_\alpha)_{\alpha \in A}$ a family of finite dimensional subspaces of $Y$ that is directed by inclusion and whose union is dense in $Y$.  If $(\sum_{\alpha \in A} Y_\alpha)_1$ satisfies the cluster value theorem, then so does $Y$.}
\end{re}

\begin{re} \label{remark3} \rm{There is a slight strengthening of Theorem \ref{maintheorem}.}  \textit{Let $Y$, $(Y_n)_n$, and $H$ be as in the statement of Theorem  \ref{maintheorem} and suppose that $(X_n)_n$ is a sequence so that $X_n$ is $1+\epsilon_n$-isomorphic to $Y_n$ and $\epsilon_n \to 0$.  If $(\sum_n X_n)_1$ satisfies the cluster value theorem for the algebra $H$, then so does $Y$.} \rm{Now let $(Z_n)$ be a sequence of finite dimensional spaces so that for every finite dimensional space $Z$ and every $\epsilon >0$, the space $Z$ is $1+\epsilon$-isomorphic to one (and hence infinitely many) of the spaces  $Z_n$. Set $C_1 = (\sum_n Z_n)_1$.} As an immediate consequence of this slight improvement of Theorem \ref{maintheorem}  we get  \textit{If $C_1$ satisfies the cluster value theorem for $H$, then so does every separable Banach space.}
\end{re}

The proof of the improved Theorem \ref{maintheorem} is essentially the same as the proof of the theorem itself. One just needs to define in Lemma \ref{lemma1} the   mapping $Q$ so that the conclusion of  Lemma \ref{lemma1} remains true: For each $n$ take an isomorphism $J_n: X_n \to Y_n$ so that for $x\in X_n$ the inequality $(1+\epsilon_n)^{-1} \|x\| \le \|J_n x\| \le \|x\|$ is valid,  and define $Q( x_n)_n   = \sum_n J_n x_n$   for $(x_n)_n$ in  $(\sum_n X_n)_1$.

\begin{re} \label{remark4} \rm{If every Banach space with an unconditional basis satisfies the cluster value theorem for the algebra $H$, then so does every separable Banach lattice. The proof is basically the same as the previous because a Banach lattice is paved by finite dimensional subspaces $(E_n)_n$ with $E_n \subset E_{n+1}$ and $E_n$ has a
$1 + 1/N$ unconditional basis.}
\end{re}


%

\end{document}